\newtheorem{Th}{Theorem}
\newtheorem*{Th*}{Theorem}
\newtheorem{Wn}{Corollary}
\newtheorem{Le}{Lemma}
\theoremstyle{definition}
\newtheorem{Prz}{Example}
\newtheorem{Uw}{Remark}
\newcommand\mR{{\mathbb R}}
\newcommand\mN{{\mathbb N}}
\newcommand\mQ{{\mathbb Q}}
\newcommand\restr{\mathord{\restriction}}
\newcommand\B{{\mathscr B}}
\newcommand\PC{{\mathscr{P}}}
\newcommand\cn{\colon}
\newcommand\wi{\geqslant}
\newcommand\mn{\leqslant}
\newcommand\ol{\overline}
\newcommand\fs{F_\sigma}
\newcommand\gd{G_\delta}
\newcommand\fcb{\mathfrak{fcBor}}
\newcommand\flb{\mathfrak{flB}}
\newcommand\fii{{\varphi}}
\title{On Borsuk's non-retract theorem}
\author{Waldemar Sieg}
\address{Kazimierz Wielki University, Institute of Mathematics, Powsta\'nc\'ow Wielkopolskich 2, 85-090 Bydgoszcz, Poland}
\email{waldeks@ukw.edu.pl}
\keywords{retraction, piecewise continuous function, Hausdorff space, extension, unit sphere, unit closed ball}
\subjclass{Primary: 26A15; 54C15; Secondary: 54C20; 54C30.}
\begin{document}

\maketitle

\begin{abstract}
Let $X$ be the Hausdorff topological space and let $A$ be both an $\fs$ and $\gd$ subset of $X$. Let also $f\cn A\to\mR$ be a function
for which the inverse image of every open subset $U\subset\mR$ is $\fs$ in $A$. We will prove that there is a linear extension operator 
$\fii^\star$ such that $\fii^\star(f)$ has the same property on $X$. An analogous result is proved for the first class function defined on an 
analogous subset of $\mR$. We will also show that the extension map is (with a supremum norm) an isometry. 
In the second part of the paper we deal with the classical Borsuk's non-retract theorem. It says that a unit sphere in $\mR^n$ is not 
a continuous retract of the unit closed ball. We will show that such a unit sphere is a piecewise continuous retract of the unit closed ball.
\end{abstract}

\section{Introduction}

Let $X$ and $Y$ be the Hausdorff topological spaces, let $A$ be an $\fs$ and $\gd$ subset of $X$ and let $f\cn A\to\mR$ be a function
for which the inverse image of every open subset $U\subset\mR$ is $\fs$ in $A$.
The mapping $r\cn X\to A$ is an {\em algebraic retraction} if $r(x)=x$ for every $x\in A$. We say that a function $f\cn X\to\mR$ is
{\em piecewise continuous}, if there is an increasing sequence $\left(X_n\right)\subset X$ of nonempty closed sets such
that $X=\bigcup^\infty_{n=0}X_n$ and the restriction $f_{\restr X_n}$ is continuous for each $n\in\mN$.
The family of all piecewise continuous functions $X\to\mR$ is denoted by $\PC(X)$. 

\begin{Prz}
Let $E(t)$ be an \emph{entier} of a real number $t$, $A=[0,1)\subset\mR$ and let $g\cn\mR\to A$ be a mapping given by the formula $g(x)=x-E(x)$.
Since 
$$
[0,1)=\bigcup_{k=1}^\infty\left[0,1-\frac{1}{k}\right]
$$ and 
$$
\mR=\bigcup_{n=-\infty}^\infty\left(n+[0,1)\right)=\bigcup_{n=-\infty}^\infty\left[n,n+1\right),
$$
$g$ is a piecewise continuous retraction of $\mR$ onto interval $A$.
\end{Prz}
Recall that $\B_1(X,Y)$ is the space of the pointwise limits of sequences of conti\-nuous functions defined on $X$. Moreover, 
$\B_1^\star(X,Y)\subsetneq\B_1(X,Y)$ is the space of the pointwise limits of those sequences
of continuous functions $(f_n)$ which are constant for every $n>n_\epsilon$ with $n_\epsilon\in\mN$. In \cite{Kirch} it is shown that the inclusion 
$\B_1^\star\subset\PC$ holds for metric spaces and the inclusion $\PC\subset\B_1^\star$ holds for all complete metric spaces.  
We say that a function $f\cn X\to Y$ is
\begin{itemize}
\item {\em of the first Borel class}, if the $f$-inverse image of every open subset of $Y$ is $\fs$ in $X$ (see \cite[p. 1]{JayRog});
\item {\em of the first level Borel class}, if the $f$-inverse image of every $\fs$ subset of $Y$ is $\fs$ in $X$ (see \cite[p. 1]{JayRog}).
\end{itemize}
Note that in the last definition, the words ''$\fs$ subset of $Y$'' may be equi\-valently replaced by ''closed subset of $Y$''.
The symbols $\fcb(X,Y)$ and $\flb(X,Y)$ stand for the families of the first Borel and first level Borel classes of functions
defined on $X$, respectively. The inclusion $\flb(X,Y)\subset\fcb(X,Y)$
holds whenever any open set in $Y$ is $\fs$ (for example if $Y$ is perfectly normal). A function witnessing the fact that
$\flb(\mR,\mR)\neq\fcb(\mR,\mR)$ is the classical Riemann function (i.e. $f(x)=0$ for $x$ irrational and $f(x)=\frac{1}{q}$ if $x=\frac{p}{q}$ where
$p$, $q$ are mutually prime) which is of the first Borel class but not of the first level Borel class.
If $f\in\fcb^b(A,Y)$ and $f\in\B_1^b(A,Y)$ are bounded elements of $\fcb(A,Y)$ and $\B_1(A,Y)$, respectively, the \emph{supremum norm}
$\|\cdot\|_A$ of $f$ on $A$ is defined as $\|f\|_A=\sup_{x\in A}|f(x)|$.

In the first part of the paper we deal with the general problem in the theory of real functions, which is inspired by the Tietze extension theorem:
\begin{center}\emph{$(P)$ Let $A$ be a nonempty subset of a topological space $X$ and let $f_0\in\mR^A$ be a function with a certain
property $(W)$. Can $f_0$ be extended to a function $f\in\mR^X$ with the same property $(W)$?}\end{center}
Classical Tietze-Urysohn's theorem says that every continuous map defined on a closed subset of a normal topological space can be extended
to a function continuous on all that space. In 1982, Jayne and Rogers \cite{JayRog} proved the following
\begin{Th}\label{Jay_Rog}
Let $X$ and $Y$ be metric spaces, $Y$ complete. Let $A\subset X$ and $f\in\PC(A,Y)$. Then there exists $A_1\subset X$ which is intersection of a $\gd$ and
an $\fs$ sets and a function $\ol{f}\in\PC(A_1,Y)$ extending $f$.
\end{Th}
Finally, in 2008 Karlova \cite{Karlova} showed, that if $X$ is perfectly paracompact and $A$ is its complete metrizable subspace, then every
continuous function $f\cn A\to Y$ has an extension $\ol{f}\in\fcb(X)$.

In this paper we prove an analogous results for the classes of first Borel and Baire one functions. In (key) Lemma $\ref{funkcja fi}$ we construct an
algebraic retraction $\fii$ whose properties allow to formulate and prove these results.
\section{Properties and useful lemmas}
The first Lemma is a well-known fact and follows for example from \cite[Th. 2, Section VI of $\S$31]{Kur}.
\begin{Le}\label{suma_fcb}
Let $X$ be a topological space. If $f,g\in\fcb(X,\mR)$ then \\$f+g\in\fcb(X,\mR)$.
\end{Le}
Since $|f|$ is a composition of $f$ and a continuous mapping $x\mapsto|x|$, we have
\begin{Le}\label{modul_fcb}
Let $X$ be a topological space. If $f\in\fcb(X,\mR)$ then \\$|f|\in\fcb(X,\mR)$.
\end{Le}
From Lemmas \ref{suma_fcb} and \ref{modul_fcb} it follows that
\begin{Wn}\label{krata_liniowa}
Let $X$ be a topological space. Then $\fcb(X,\mR)$ is a linear lattice.
\end{Wn}
Next lemma is crucial for the main result.
\begin{Le}\label{funkcja fi}
Let $X$ be the Hausdorff topological space and let $A$ be both an $\fs$ and $\gd$ subset of $X$. Let also $g\cn X\setminus A\to A\subset X$
be a continuous map. Then the algebraic retraction $\varphi\cn X\to A\subset X$ given by the formula
\begin{equation}\label{fi}
\varphi(x)=\left\{\begin{array}{lll}
x & \textrm{:} & x\in A, \\
g(x) & \textrm{:} & x\in X\setminus A
\end{array} \right.
\end{equation}
belongs to the class $\PC(X,X)\cap\flb(X,X)$. Furthermore, if $X$ is perfectly normal then $\fii\in\fcb(X,X)$. Finally, if $X=\mR$ then
$\fii\in\B_1(X,X)$.
\end{Le}
\begin{proof}
We show first that $\fii\in\PC(X,X)$. Let $i_A\cn A\to A$ be an identity on $A$.
Since $A$ is both an $\fs$ and $\gd$ subset of $X$, there are increasing sequences $(F_n)$ and $(G_n)$ of closed sets such that
$\bigcup_{n}F_n=A$ and $\bigcup_{n}G_n=X\setminus A$. Obviously $F_j\cap G_k=\emptyset$ for every $j,k\in\mN$. Since $g$ is continuous,
the restrictions $i_{A_{\restr F_n}}$ and $g_{\restr G_n}$ are continuous for every $n\in\mN$. Set $H_n=F_n\cup G_n$, for every $n\in\mN$. Obviously
$\bigcup_n{H_n}=X$. Furthermore, the continuity of restrictions
$i_{A_{\restr F_n}}$ and $g_{\restr G_n}$, implies continuity of $\fii$ on every $H_n$. That completes this part of the proof.

To show that $\fii\in\flb(X,X)$ we assume $F$ to be a closed subset of $X$. We have $\fii^{-1}(F)=(A\cap F)\cup g^{-1}(F)$.
Since $A$ is $\fs$ and $F$ closed, $A\cap F$ is $\fs$. Further $g^{-1}(F)$ is a relatively closed subset of an $\fs$-set $X\setminus A$, hence it is again
$\fs$. So, $\fii^{-1}(F)$ is $\fs$.

If $X$ is perfectly normal, then any open set is $\fs$. Hence it follows from the already proved fact that the inverse image
of any open set is $\fs$. Thus $\fii\in\fcb(X,X)$.

To see that $\fii\in\B_1(\mR,\mR)$ it is enough to use the well-known fact that $\fcb(\mR,\mR)=\B_1(\mR,\mR)$ (cf. the Lebesgue-Hausdorff theorem,
\cite[$\S$31, Section IX]{Kur}).
\end{proof}
\section{The main extension results}
The first main theorem considers extending functions from the first Borel class.

\begin{Th}\label{rozsz_fcb}
Let $X$ be a perfectly normal space and let $A$ be its both an $\fs$ and $\gd$ subset. Let also $f\in\fcb(A,\mR)$. Then
there exists a non-negative, preserving unity and linear extension operator $\fii^\star\cn\fcb(A,\mR)\to\fcb(X,\mR)$ such that its restriction to
$\fcb^b(A,\mR)$ is an isometry.
\end{Th}

\begin{proof}
Let $g\cn X\setminus A\to A\subset X$ be a continuous map. Set $\fii^\star(f)=f\circ\fii$, where $\fii$ is an 
algebraic retraction given by (\ref{fi}). Positivity of $\fii^{\star}$ is due to the fact that the class $\fcb(X)$ 
equipped with the order by the coordinates is a linear lattice (see Corollary \ref{krata_liniowa}).
We shall show that $\fii^{\star}(f)\in\fcb(X)$ for every $f\in\fcb(A)$. Let $U$ be an open subset of $\mR$. There is
$$
\left(\fii^{\star}(f)\right)^{-1}(U)=(f\circ \fii)^{-1}(U)=\fii^{-1}\left(f^{-1}(U)\right).
$$
Since $f\in\fcb(A)$, $f^{-1}(U)$ is $\fs$ in $A$. Hence $f^{-1}(U)=\bigcup_{n}K_n$ with every $K_n$ closed in $A$. Thus
$$
f^{-1}(U)=\bigcup_{n}K_n=\bigcup_{n}(A\cap G_n)=A\cap\bigcup_{n}G_n
$$
with every $G_n$ closed in $X$. Therefore
$$
\left(\fii^{\star}(f)\right)^{-1}(U)=\fii^{-1}\left(A\cap\bigcup_{n}G_n\right)=\fii^{-1}(A)\cap\fii^{-1}\left(\bigcup_{n}G_n\right)=
$$
$$
=X\cap\bigcup_{n}\fii^{-1}(G_n)=\bigcup_{n}\fii^{-1}(G_n).
$$
According to Lemma \ref{funkcja fi}, every set $\fii^{-1}(G_n)$ is $\fs$ in $X$. Hence the set
$\left(\fii^{\star}(f)\right)^{-1}(U)=\bigcup_{n}\fii^{-1}(G_n)$ is $\fs$ in $X$ as well.
Thus $\fii^{\star}(f)\in\fcb(X)$.
$\fii^{\star}$ is of course an extension of $f$. Its linearity and preserving of the unity are obvious.
Furthermore, if $f$ is a bounded element of $\fcb(A)$, we have
$$
\|\fii^{\star}(f)\|_X=\sup_{x\in X}|f\left(\fii(x)\right)|=\sup_{a\in A}|f(a)|=\|f\|_A.
$$
\end{proof}

Note that in \cite[Th. 3]{SiegWojtowicz} we have shown, that if $X$ is Hausdorff and $A$ is its both an $\fs$ and $\gd$ subset, then $f\in\PC(A)$ has
a linear extension $\fii^{\star}(f)=f\circ\fii$. In 2005 Kalenda and Spurn\'y proved \cite[Ex. 10]{KalendaSpurny} that there is a bounded function of the
first class $f\cn\mQ\cap[0,1]\to\mR$, which cannot be extended to a function of the first class on $[0,1]$.
From these considerations and by Theorem \ref{Jay_Rog} we obtain the following
\begin{Uw}
Let $A$ be a nonempty subset of a metric space $X$. Let also $f\in\PC(A,\mR)$. If $\ol{f}\in\PC(A_1,\mR)$ extends $f$
then in Theorem \ref{Jay_Rog} we cannot require $A_1$ to be simultanoeusly $\fs$ and $\gd$.
\end{Uw}
\begin{proof}
Let $A=[0,1]\cap\mQ$ and $X=\mR$. $A$ is of course an $\fs$ subset of $X$. Moreover, $\PC(A)=\B_1(A)=\mR^A$. Let $f\cn A\to\mR$. Suppose that there is
an extension $\ol{f}\in\PC(A_1)$ such that $A_1\supset A$ is both an $\fs$ and $\gd$ subset of $\mR$ and $\ol{f}_{\restr A}=f$. Then, by
\cite[Th. 3]{SiegWojtowicz}, the mapping $\fii^{\star}(\ol{f})=\ol{f}\circ\fii$ is a piecewise continuous (and thus Baire one class) extension of $\ol{f}$.
Those arguments would imply that every function $f\in\B_1(A)$ has an extension $\ol{f}\in\B_1(\mR)$, a contradiction.
\end{proof}

The second main theorem deals with extensions of Baire one class functions.
\begin{Th}\label{rozsz_B1}
Let $A$ be both an $\fs$ and $\gd$ subset of $\mR$ and let $f\in\B_1(A,\mR)$.
There exists a non-negative, preserving unity and linear extension operator $\fii^{\star}\cn\B_1(A,\mR)\to\B_1(\mR,\mR)$
such that its restriction to $\B_1^b(A)$ is an isometry.
\end{Th}
\begin{proof}
Let $g\cn\mR\setminus A\to A\subset\mR$ be a continuous map.
Set $\fii^{\star}(f)=f\circ\fii$, where $\fii$ is an algebraic retraction (\ref{fi}).
Since $f\in\B_1(A,\mR)$, there is a sequence $\left(f_n\right)$ of continuous mappings on $A$ such that $f(a)=\lim_{n\to\infty}f_n(a)$, for every
$a\in A$. Since (by Lemma \ref{funkcja fi}) $\fii\in\B_1(A,\mR)$, we have $\fii=\lim_{n\to\infty}\phi_n$ with every $\phi_n$ continuous on $A$.
Obviously every composition $f_n\circ\phi_n$ is continuous on $A$. Let $x\in X$. Since (again by Lemma \ref{funkcja fi}) $\fii\in\PC(A,\mR)$ and 
$\PC(A,\mR)=\B_1^\star(A,\mR)$ (see the Introduction), there is  $n_x\in\mN$ such that $\phi_n(x)=\fii(x)$ for every $n>n_x$. Therefore
$$
\left(f_n\circ\phi_n\right)(x)=f_n\left(\phi_n(x)\right)=f_n\left(\fii(x)\right)\to f\left(\fii(x)\right)
$$
and $f_n\circ\phi_n\xrightarrow{n\to\infty}f\circ\fii=\fii^{\star}\in\B_1(\mR,\mR)$.
Linearity and preserving of the unity are obvious for $\fii^{\star}$.
Furthermore, if $f$ is a bounded element of $\B_1(A,\mR)$, we have
$$
\|\fii^{\star}(f)\|_X=\sup_{x\in X}|f\left(\fii(x)\right)|=\sup_{a\in A}|f(a)|=\|f\|_A.
$$
\end{proof}

\section{Borsuk's non-retract theorem}

The classical Borsuk's non-retract theorem \cite[Theorem 8]{ParkJeong} asserts that
\begin{Th}[Borsuk]
A unit sphere in $\mR^n$ is not a continuous retract of the unit closed ball.
\end{Th}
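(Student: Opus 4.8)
The plan is to argue by contradiction, transporting the problem into an algebraic invariant that separates the sphere from the ball. Write $S^{n-1}=\bd B^n$ for the unit sphere and $B^n$ for the closed unit ball in $\mR^n$, and suppose, contrary to the assertion, that there exists a continuous retraction $r\cn B^n\to S^{n-1}$. Denoting by $i\cn S^{n-1}\to B^n$ the inclusion, the defining property $r(x)=x$ for $x\in S^{n-1}$ says precisely that $r\circ i=\id_{S^{n-1}}$. The entire strategy rests on exploiting this factorization of the identity through the ``filled in'' space $B^n$.

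First I would apply the $(n-1)$-st reduced singular homology functor $\widetilde{H}_{n-1}(\,\cdot\,;\mZ)$ to the composition $r\circ i$. Functoriality of homology turns it into a composition of homomorphisms, so that $r_\ast\circ i_\ast=(\id_{S^{n-1}})_\ast=\id$ on $\widetilde{H}_{n-1}(S^{n-1})$. In particular $i_\ast$ must be injective, since it has a left inverse.

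Next I would invoke the two homology computations that force the contradiction. On the one hand, the ball $B^n$ is convex, hence contractible, so every reduced homology group vanishes and $\widetilde{H}_{n-1}(B^n)=0$. On the other hand, $\widetilde{H}_{n-1}(S^{n-1})\cong\mZ\neq 0$ for every $n\geq 1$ (for $n=1$ this is the statement that $S^0$ has two path components, so that $\widetilde{H}_0(S^0)\cong\mZ$, which equally well can be phrased through connectedness). Then $i_\ast$ would be an injection of the nontrivial group $\widetilde{H}_{n-1}(S^{n-1})$ into the zero group $\widetilde{H}_{n-1}(B^n)$, which is impossible; this contradicts the existence of $r$ and proves the theorem.

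The only genuinely nontrivial ingredient, and therefore the main obstacle, is the computation $\widetilde{H}_{n-1}(S^{n-1})\cong\mZ$ together with the contractibility of $B^n$; everything else is formal diagram chasing. It is worth stressing that this difficulty cannot be circumvented by purely point-set arguments, since the statement encodes the topological fact that the sphere is not fillable inside the ball. A logically equivalent route, which avoids homology but merely relocates the same difficulty, is to deduce the result from Brouwer's fixed point theorem: given a retraction $r$, the map $x\mapsto -r(x)$ would be a continuous self-map of $B^n$ landing in $S^{n-1}\subset B^n$ with no fixed point, contradicting Brouwer's theorem.
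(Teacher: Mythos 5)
Your proof is correct, but note that the paper itself offers no proof of this theorem at all: it is quoted as a classical result with a citation to Park and Jeong, serving only as the foil for the paper's actual contribution (Theorem 2, the piecewise continuous retraction). So your argument cannot coincide with ``the paper's proof''; what you have supplied is the standard algebraic-topology proof, and it is sound. The functorial reduction is stated accurately: from $r\circ i=\id_{S^{n-1}}$ you get $r_\ast\circ i_\ast=\id$ on $\widetilde{H}_{n-1}(S^{n-1};\mZ)$, forcing $i_\ast$ to be injective, while contractibility of $B^n$ gives $\widetilde{H}_{n-1}(B^n)=0$ and $\widetilde{H}_{n-1}(S^{n-1})\cong\mZ$ is nontrivial for all $n\geq 1$ -- and you correctly flagged the edge case $n=1$, where the claim reduces to $S^0$ having two path components. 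Your alternative route via Brouwer also checks out: if $-r(x)=x$ then $x\in S^{n-1}$ (since $\norm{r(x)}=1$), whence $r(x)=x$ and $x=-x$, impossible on the unit sphere; so $x\mapsto -r(x)$ would be a fixed-point-free self-map of $B^n$. One fair remark: logically, Brouwer and the no-retraction theorem are equivalent, so calling the second route a ``deduction'' presupposes an independent proof of Brouwer; your homology argument is the self-contained one. Given that the surrounding paper works in the piecewise continuous category precisely because this continuous obstruction exists, your closing observation -- that the obstruction is genuinely algebraic-topological and not point-set -- is exactly the right framing for why the paper's Theorem 2 does not contradict Borsuk.
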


It turns out that if the word ''continuous retract'' is replaced by ''piecewise continuous retract'',
the Borsuk's claim is no longer true.

\begin{Th}\label{przyk_kula_sfera}
Let $(X,\|\cdot\|)$ be a normed space (or $X$ is simply a closed unit ball in a normed space $Y$) and let $U=X\setminus\{\ol0\}$. 
Let also $A$ be the unit sphere of $X$ and fix $t\in A$. The mapping $\Phi\cn X\to A$ given by the formula
\begin{equation}\label{retrakcja kuli}
\Phi(x)=\left\{\begin{array}{lll}
\frac{x}{\|x\|} & \textrm{:} & x\in U,\\
t & \textrm{:} & x\in X\setminus U
\end{array} \right.
\end{equation}
is piecewise continuous. Hence, the unit sphere is a piecewise contiunous retract of the unit closed ball,
as well as of $Y$.
\end{Th}

\begin{proof}
As elements of the increasing sequence $\left(X_n\right)$ it is enough to take
$X_n=\left\{x\in X:\|x\|\wi\frac{1}{n}\right\}$).
\end{proof}

\begin{Uw}
Let $X$ be a closed unit ball in a normed space and let $U=X\setminus\{\ol0\}$. 
Let also $A$ be the unit sphere of $X$. Lets consider any retraction $\fii\cn X\to A$.
To put an image $\fii(x)$ of any element $x\in U$ into $A$,
we must have $\|\fii(x)\|=1$, so $\fii(x)$ must be a normalization of $x$. On the other hand,
every element $x\in X\setminus U$ must be transformed to $A$ without normalization.
So, the image of $x\in X\setminus U$ must be a point $t\in A$. From these considerations we get
that general form of every retraction $X\to A$ is given by (\ref{retrakcja kuli}).
\end{Uw}

Our last theorem describes a retract of a normed space $X$ onto the open unit ball.
It is an extension of Theorem \ref{przyk_kula_sfera}.
\begin{Th}
Let $(X,\|\cdot\|)$ be at least $1$-dimensional normed space and let $K^0$ be the open unit ball in $X$.
Then the function $\Phi\cn X\to K^0$ given by the formula
$$
\Phi(x)=\left(1-\frac{E(\|x\|)}{\|x\|}\right)\cdot x
$$
is a piecewise continuous retraction from $X$ onto $K^0$.
\end{Th}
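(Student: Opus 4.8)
The plan is to verify, in turn, that $\Phi$ maps $X$ into $K^0$, that it fixes $K^0$ pointwise, and that it is piecewise continuous; the retraction property then follows at once. First I would settle the values. For $\|x\|<1$ one has $E(\|x\|)=0$, so $\Phi(x)=x$; in particular $\Phi(\ol0)=\ol0$, and the apparent $0/0$ at the origin is harmless. Thus $\Phi$ restricts to the identity on $K^0$, which is exactly the retraction requirement once we also confirm $\Phi(X)\subseteq K^0$. For the latter, writing $r=\|x\|$ and $\Phi(x)=\frac{r-E(r)}{r}\,x$ for $x\neq\ol0$, I compute $\|\Phi(x)\|=\frac{r-E(r)}{r}\cdot r=r-E(r)\in[0,1)$, since $E(r)\mn r<E(r)+1$. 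Hence every value of $\Phi$ lies in the open unit ball and $\Phi\cn X\to K^0$ is a retraction.

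Next I would locate the discontinuities. As $x\mapsto\|x\|$ and $x\mapsto x$ are continuous, $\Phi$ can only fail to be continuous where the scalar $s(r)=1-\frac{E(r)}{r}$ is discontinuous, that is, where $r=\|x\|$ is a positive integer. The function $s$ is continuous on each interval $[k,k+1)$ and right-continuous at every integer, with $s(k)=0$ but $\lim_{r\to k^-}s(r)=\tfrac1k$. Consequently $\Phi$ is continuous on every half-open shell $\{k\mn\|x\|<k+1\}$ ($k\wi0$), equals $\ol0$ on each integer sphere $\{\|x\|=k\}$, and its only discontinuities occur on those spheres, approached from inside: as $\|x\|\to k^-$ one has $\Phi(x)\to\tfrac1k x\neq\ol0$, whereas from outside $\Phi(x)\to\ol0=\Phi(x_0)$.

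With this picture the decomposition suggests itself. A naive closed annulus $\{k\mn\|x\|\mn k+1\}$ will not do, since it meets the outer sphere $\{\|x\|=k+1\}$ from inside, where the jump lives. Instead I would delete a thin inner collar from beneath each integer sphere and put
$$
X_n=\{x\in X:\|x\|\mn n\}\setminus\bigcup_{k=1}^{n}\Bigl\{x:k-\tfrac1n<\|x\|<k\Bigr\}.
$$
Each $X_n$ is closed, being $\{\|x\|\mn n\}$ intersected with the finitely many closed sets $\{\|x\|\mn k-\tfrac1n\}\cup\{\|x\|\wi k\}$; it is nonempty (it contains $\ol0$); and the sequence is increasing, because passing from $n$ to $n+1$ widens the ball, shrinks every old collar (as $\tfrac1{n+1}<\tfrac1n$), and places the single new deletion at $k=n+1$ beyond radius $n$. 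Their union is all of $X$: a fixed $x$ with $r=\|x\|$ lies in $X_n$ as soon as $n\wi r$ and $\tfrac1n$ is smaller than the distance from $r$ to the least integer exceeding it (every integer radius being retained, the collars being open). Finally $\Phi\restr{X_n}$ is continuous: at a point of non-integer norm this is immediate, and at $x_0$ with $\|x_0\|=k\mn n$ any $x\in X_n$ with $\|x-x_0\|<\tfrac1n$ satisfies $\|x\|\in(k-\tfrac1n,k+\tfrac1n)$, hence $\|x\|\wi k$ after the collar is removed; so $x_0$ is approached only from the side on which $\Phi$ is continuous and $\Phi(x)\to\ol0=\Phi(x_0)$.

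The main obstacle is precisely this last design choice. Because each discontinuity sphere is a limit of the space from its inner side, no closed set containing an inner neighbourhood of an integer sphere can carry a continuous restriction of $\Phi$; the proof hinges on excising the inner collars of width $\tfrac1n$ so that, within each $X_n$, the integer spheres are met only from outside, where the one-sided (right-)continuity of $s$ rescues continuity. I would also note that the hypothesis $\dim X\wi2$ plays no role here: the norm computation, the location of the jumps, and the closed decomposition are all insensitive to the dimension, so the statement in fact holds verbatim for every normed space of dimension $\wi1$.
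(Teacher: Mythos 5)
Your proof is correct and follows essentially the same route as the paper: both decompose $X$ into closed annular pieces of the form $\left\{x\in X: k\mn\|x\|\mn k+1-\eps\right\}$ that touch the integer spheres only from outside, where the scalar factor is right-continuous, and your $X_n$ is exactly the finite union $\bigcup_{k=0}^{n-1}\left\{k\mn\|x\|\mn k+1-\frac{1}{n}\right\}\cup\left\{\|x\|=n\right\}$ of the paper's sets $B_{k,j}$, assembled explicitly into the increasing sequence of nonempty closed sets that the definition of piecewise continuity demands (a step the paper leaves implicit, since it only exhibits the countable closed family $B_{n,j}$). Your closing observations --- that the inner collars must be excised because the discontinuity is one-sided, and that the hypothesis $\dim X\wi 2$ is never used in the argument --- are both accurate.
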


\begin{proof}
Note first that
$$
\Phi(x)=\left\{\begin{array}{lll}
x & \textrm{if} & \|x\|<1,\\
0 & \textrm{if} & \|x\|=1.
\end{array} \right.
$$
Since
$$\Phi(x)=(\|x\|-E(\|x\|))\cdot\frac{x}{\|x\|}$$
and 
$$\|\Phi(x)\|=\|x\|-E(\|x\|)<1,$$
 $\Phi$ is a surjection from $X$ onto $K^0$. Set
$$
B_n=\{x\in X:n\mn\|x\|<n+1\}=
$$
$$
=\bigcup_{j=1}^\infty\left\{x\in X:n\mn\|x\|\mn n+1-\frac{1}{j}\right\}=\bigcup_{j=1}^\infty B_{n,j}.
$$
Obviously $X=\bigcup\limits_{n=0}^\infty\bigcup\limits_{j=1}^\infty B_{n,j}$, every set $B_{n,j}$ is closed and every restriction $\Phi_{\restr B_{n,j}}$ is continuous.
\end{proof}

\subsection*{Acknowledgments}
The author would like to express his gratitude to Professor Marek Wójtowicz for his help in preparation of this manuscript.

\end{document}